\def \Q {{\mathbb{Q}}}
\def \N {{\mathbb{N}}}
\def \Z {{\mathbb{Z}}}
\def \C {{\mathbb{C}}}
\def \ep {{\epsilon}}
\def \al {{\alpha}}
\DeclareMathOperator{\lcm}{lcm}
\newtheorem*{theorem*}{Theorem}
\newtheorem{theorem}{Theorem}
\newtheorem{cor}[theorem]{Corollary}
\newtheorem{pro}[theorem]{Proposition}
\title{Irreducibility criterion for certain trinomials}
\author {Biswajit Koley,  A.Satyanarayana Reddy \\
Department of 
Mathematics, Shiv Nadar 
University, India-201314\\ (e-mail: 
bk140@snu.edu.in, satyanarayana.reddy@snu.edu.in).
  }
\date{}
\begin{document}
\maketitle
\begin{abstract}
 In this article we study the irreducibility of polynomials  of the form 
 $x^n+\ep_1 x^m+p^k\ep_2$, $p$ being a prime number. We will show that they are irreducible for $m=1$. We have also provided the cyclotomic factors and reducibility criterion for trinomials of the form $x^n+\ep_1x^m+\ep_2$, where $\ep_i\in \{\, -1,+1\,\}$. This corrects few of the existing results of W. Ljuggren's on $x^n+\ep_1x^m+\ep_2$.
\end{abstract}
{\bf{Key Words}}: Cyclotomic polynomials, irreducible polynomials, reciprocal polynomials \\
{\bf{AMS(2010)}}:11R09, 12D05,12E05\\
\section*{Introduction}\label{sec:intro}
E.S.Selmer~\cite{Sel} studied the irreducibility of  trinomials of the form   
$x^n\pm x^m \pm 
1$ over $\Q$. He provided a complete solution for $m=1.$
Later Ljunggren~\cite{Lju} extended Selmer's result for all $m>1$ and  proved for quadrimonials as well. A version of his result for trinomials is the following.
\begin{theorem}(Ljunggren)\label{thm:Lju}
Let $f(x)=x^n+\epsilon_1 x^m+\epsilon_2$ 
where $\ep_j\in \{-1,+1\}$. Then $f(x)$  has at most one irreducible non-reciprocal factor and 
 a reciprocal factor of $f(x)$ if any,  is the product cyclotomic polynomials.
\end{theorem}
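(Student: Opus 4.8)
Assume $0<m<n$, so $f$ is a genuine trinomial, and write $f=g\,h$ with $h$ the product of the irreducible reciprocal factors of $f$ and $g$ the product of the irreducible non-reciprocal ones; then $h$ is reciprocal and every irreducible factor of $g$ is non-cyclotomic. Two preliminary observations: $f(0)=\ep_2\neq0$ and $f(1)=1+\ep_1+\ep_2\in\{-1,1,3\}$, so $f$ has nonzero constant term and $x-1\nmid f$; and $f$ is squarefree, because $f'=x^{m-1}(nx^{\,n-m}+\ep_1m)$ and a common zero $\al$ of $f$ and $nx^{\,n-m}+\ep_1m$ would have to satisfy simultaneously $|\al|^{\,n-m}=m/n<1$ and, after substituting $\al^n=-\tfrac{\ep_1m}{n}\al^m$ into $f(\al)=0$, also $|\al|^{\,m}=\tfrac{n}{n-m}>1$. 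So all multiplicities are $1$, and it remains to prove: (i) $h$ is a product of cyclotomic polynomials; (ii) $g$ is irreducible or constant.

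For (i) the plan is the ``reciprocal $\Rightarrow$ cyclotomic'' argument. If $p\mid f$ is irreducible and reciprocal with root $\al$, then $1/\al$ is also a root of $p$, hence of $f$, and $\al\neq0$; from $\al^n+\ep_1\al^m+\ep_2=0$ and $1+\ep_1\al^{\,n-m}+\ep_2\al^n=0$ (the second obtained by multiplying $f(1/\al)=0$ by $\al^n$), eliminating $\al^n$ gives $\al^{\,n-2m}=\ep_2$, so for $n\neq2m$ the element $\al$ is a root of unity and $p$ is cyclotomic. If $n=2m$ then $f(x)=\psi(x^m)$ with $\psi(y)=y^2+\ep_1y+\ep_2$: for $\ep_2=1$ the roots of $\psi$ are primitive cube/sixth roots of unity and $f$ is a product of cyclotomic polynomials, while for $\ep_2=-1$ the roots of $\psi$ have product $-1$ and sum $-\ep_1\neq0$, from which no two roots of $f$ are mutually inverse and $f$ has no reciprocal factor. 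In all cases (i) holds, and $h$ is then exactly the cyclotomic part of $f$.

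For (ii), put $f^\ast(x)=x^nf(1/x)=1+\ep_1x^{\,n-m}+\ep_2x^n$, so $\ep_2f^\ast$ is again a trinomial of the same type, $p\mid f\Rightarrow p^\ast\mid f^\ast$, and every cyclotomic factor of $f$ divides $f^\ast$ (if $\zeta^d=1$, $f(\zeta)=0$, then $f^\ast(\zeta)=\zeta^n\overline{f(\zeta)}=0$). Assume $n\neq2m$ (the case $n=2m$ is analogous, the identity below reading $g+g^\ast=2\ep_1x^m$ when $\ep_2=-1$ and $g$ being constant when $\ep_2=1$). Subtracting yields the key identity
\[
f(x)-\ep_2f^\ast(x)=\ep_1\bigl(x^m-\ep_2x^{\,n-m}\bigr)=\pm\,\ep_1\,x^{\mu}\bigl(x^{\nu}-\ep_2\bigr),\qquad\mu=\min(m,n-m),\ \nu=|n-2m|,
\]
whose right side is, up to $x^\mu$, a squarefree product of cyclotomic polynomials. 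Since $\gcd(f,f^\ast)$ divides this and is coprime to $x$, it divides $x^\nu-\ep_2$; combined with (i) and the fact that $\gcd(f,f^\ast)$ contains every cyclotomic factor of $f$, this gives $\gcd(f,f^\ast)=h$, hence $g=f/\gcd(f,f^\ast)$ and $\gcd(g,g^\ast)=1$ (a common irreducible factor of $g$ and $g^\ast$ would give $\al$ and $1/\al$ both roots of $f$, forcing $\al$ a root of unity as in (i), contradicting that $g$ is cyclotomic-free). Cancelling $h$ from the identity (it divides $x^\nu-\ep_2$) leaves
\[
g(x)-\ep_2\,g^\ast(x)=\pm\,\ep_1\,x^{\mu}\,e(x),\qquad e(x):=\tfrac{x^{\nu}-\ep_2}{h(x)}\ \text{a product of cyclotomic polynomials},
\]
and, since the right side has degree $<\deg g$, comparing leading coefficients gives $g(0)=\ep_2$: in short, $g$ is \emph{almost reciprocal} — it agrees with $\ep_2g^\ast$ outside a bounded band of middle-degree coefficients — while still $\gcd(g,g^\ast)=1$.

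The remaining step, which I expect to be the main obstacle, is to conclude that such a $g$ (almost reciprocal, $\gcd(g,g^\ast)=1$, no cyclotomic factors) must be irreducible or constant. Suppose $g=g_1g_2$ with $\deg g_1,\deg g_2\ge1$ and $g_1$ irreducible; then $g_1,g_2,g_1^\ast,g_2^\ast$ are pairwise non-associate irreducibles, $g$ has no root on the unit circle, and by Kronecker's theorem each $g_i$ has a root of modulus $>1$. Reducing $g-\ep_2g^\ast=\pm\ep_1x^\mu e$ modulo $g_1$ gives a nontrivial congruence $g_1^\ast g_2^\ast\equiv\mp\ep_1\ep_2\,x^\mu e\pmod{g_1}$, both sides coprime to $g_1$. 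I would attempt to derive a contradiction by combining this with the root-location data — for instance via the substitution $y=x+x^{-1}$, under which $f(x)f^\ast(x)=x^nQ(x+x^{-1})$ with $Q(2\cos\theta)=|e^{in\theta}+\ep_1e^{im\theta}+\ep_2|^2\ge0$, or via a resultant identity for $\mathrm{Res}(g_1,g_1^\ast)$ — but making this precise (in particular, ruling out the various possible coincidences among $g_1^\ast,g_2^\ast$ and $e$) is the delicate core of the argument. Finally, when $d:=\gcd(n,m)>1$ I would first reduce to $\gcd(n,m)=1$ by writing $f(x)=F(x^d)$ for a trinomial $F$ with coprime exponents and tracking how reciprocal, non-reciprocal and cyclotomic factors transform under $x\mapsto x^d$.
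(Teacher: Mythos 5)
Your proposal does not close the statement: the paper itself does not reprove this result (it is quoted from Ljunggren's 1960 paper), but judged on its own terms your argument establishes only the easier half. The squarefreeness computation, the elimination showing that a common root $\al$ of $f$ and $f^\ast$ satisfies $\al^{\,n-2m}=\ep_2$ (hence is a root of unity when $n\ne 2m$), the treatment of $n=2m$, and the identity $f-\ep_2 f^\ast=\ep_1(x^m-\ep_2x^{\,n-m})$ with the conclusion $\gcd(f,f^\ast)=h$ are all correct, and they prove that every reciprocal factor of $f$ is a product of cyclotomic polynomials. But the substance of Ljunggren's theorem is the other assertion, that the non-reciprocal part $g$ has \emph{at most one} irreducible factor, and at exactly this point you stop: the paragraph beginning ``The remaining step, which I expect to be the main obstacle'' offers only candidate tools (Kronecker's theorem, the substitution $y=x+x^{-1}$, a resultant for $g_1$ and $g_1^\ast$) without carrying any of them out, and the ``almost reciprocal'' property of $g$ by itself does not force irreducibility, so there is a genuine gap rather than a routine verification left to the reader.

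For comparison, the classical way to close this gap is not via the congruence modulo $g_1$ you set up, but via the substitution you did not make: from a putative factorization $f=f_1f_2$ one forms $G(x)=x^{\deg f_1}f_1(x^{-1})f_2(x)$, notes $G(x)\,x^nG(x^{-1})=f(x)\,x^nf(x^{-1})$, and compares the coefficient of $x^n$ on both sides; this says the sum of squares of the coefficients of $G$ equals that of $f$, namely $3$, so $G$ is again a monic trinomial with coefficients $\pm1$ and the same extreme terms, and a short analysis of the possibilities ($G=\pm f$, $\pm f^\ast$, or a new trinomial sharing roots with $f$) yields that two irreducible non-reciprocal factors cannot coexist. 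This is exactly the device the paper itself deploys later, in the proof of its Theorem~\ref{mainthm}, where $g(x)=x^sf_1(x^{-1})f_2(x)$ and the coefficient of $x^n$ in $g\tilde g=x^nf(x)f(x^{-1})$ is compared; adapting that computation to $\ep_2=\pm1$ instead of $\pm p^k$ is the missing core of your argument.
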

Ljunggren provided the possible cyclotomic factors for trinomials, but they seemed to be incorrect in certain cases. For example, according to Theorem~3 of Ljunggren~\cite{Lju}, the polynomials 
$x^{50}-x^4-1$ and $x^{50}+x^{22}-1$ are divisible by  $x^4+x^2+1$ but they are 
divisible by  $x^4-x^2+1.$ Similarly if $d$ is even, $d_1\equiv 5 \pmod 6, d_2\equiv 1 \pmod 6$ 
and $d_3$ is odd, then 
$x^{dd_1}+x^d-1, x^{dd_2}-x^{2d}-1$ and $x^{2^{d_3}d}-x^d+1$ are divisible by 
$x^{2d}+x^d+1$ but they are actually  divisible by $x^{2d}-x^d+1.$

 Based on the above examples, we revisited Ljunggren's work and corrected those errors. For similar studies and related work, reader can look into ~\cite{Sel},~\cite{Lju},~\cite{tver},~\cite{mills},~\cite{F:J}. 

Immediately the question appears about the reducibility of polynomials of the form $x^n+\ep_1x^m+\ep_2 p$, $p$ being a prime. If $p$ is an odd prime, then the polynomials are irreducible directly follows from Proposition $1$ of \cite{LP}. Recently, the authors\cite{BS} have shown that $x^n+\ep_1x^m+2\ep_2$ has exactly one irreducible non-reciprocal factor apart from its cyclotomic factors. The method used there doesn't apply to the polynomials $x^n+\ep_1x^m+p^k\ep_2$ with $k\ge 2$. With a different approach, we will prove that 
\begin{theorem}\label{mainthm}
Suppose $f(x)=x^n+\ep_1 x+\ep_2 p^k$ be a polynomial of degree $n\ge 2$ with $p$ being a prime number and $\ep_i\in \{\, -1, +1\,\}, k\ge 2$. Then $f(x)$ is irreducible.
\end{theorem}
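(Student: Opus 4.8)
The plan is to argue by contradiction, combining a crude archimedean bound on the size of the roots of $f$ with the factorization of $f$ modulo $p$. Neither ingredient suffices alone: Eisenstein fails outright (the coefficient of $x$ is a unit), the $p$-adic Newton polygon only shows that $f$ splits $p$-adically as (degree $1$)$\,\cdot\,$(degree $n-1$), which is consistent with reducibility, and the size estimate by itself will only force $p\mid g(0)$ and $p\mid h(0)$, which is consistent once $k\ge 2$. The point is to play the two local pieces of information against each other.

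First I would set up the contradiction. Suppose $f$ is reducible over $\Q$. Since $f$ is monic, Gauss's lemma yields $f=gh$ with $g,h\in\Z[x]$ monic and $\deg g,\deg h\ge 1$. As $f(0)=\ep_2 p^k\neq 0$, we have $g(0)h(0)=\ep_2 p^k$, so $|g(0)|=p^a$ and $|h(0)|=p^{k-a}$ for some $0\le a\le k$. Next, the key archimedean observation: \emph{every complex root $\beta$ of $f$ satisfies $|\beta|>1$}. Indeed $\beta^n=-\ep_1\beta-\ep_2 p^k$ gives $|\beta|^n+|\beta|\ge |\beta^n+\ep_1\beta|=p^k\ge 4$ (here $p\ge 2$ and $k\ge 2$ are used), whereas $|\beta|\le 1$ would force $|\beta|^n+|\beta|\le 2$. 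Consequently $|g(0)|=\prod_{g(\beta)=0}|\beta|>1$ and $|h(0)|>1$, so $1\le a\le k-1$; in particular $p\mid g(0)$ and $p\mid h(0)$.

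Then I would extract the contradiction by reducing modulo $p$. In $\F_p[x]$ we have $\bar f(x)=x^n+\bar\ep_1 x=x\,(x^{n-1}+\bar\ep_1)$, and since $\bar\ep_1\neq 0$ the factor $x$ is coprime to $x^{n-1}+\bar\ep_1$, so $x$ divides $\bar f$ exactly once. Hence $x$ divides exactly one of $\bar g,\bar h$, say $\bar g$, which forces $\bar h(0)\neq 0$, i.e. $p\nmid h(0)$ — contradicting the previous paragraph. Therefore $f$ is irreducible.

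The ``hard part'' is really just recognizing that these two elementary local facts must be combined; once that is seen, all the estimates are routine. As a sanity check on the hypotheses, the inequality $p^k\ge 4$ is exactly what breaks for $(p,k)=(2,1)$, where $x^2+x-2=(x-1)(x+2)$ is reducible, so $k\ge 2$ is essential in this argument; the remaining case $p$ odd, $k=1$ is already handled by Proposition~1 of \cite{LP}. If desired, one could alternatively replace the mod-$p$ step by the $p$-adic Newton polygon of $f$ at $p$ (vertices $(0,k),(1,0),(n,0)$), which shows all roots of $f$ have $p$-adic valuation $0$ except one of valuation $k$, so one of $g,h$ has $p$-unit constant term; but the reduction-mod-$p$ version avoids $p$-adic language entirely.
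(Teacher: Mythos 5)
Your proof is correct, and its first half coincides with the paper's: the bound $|\beta|>1$ for every root is exactly Theorem \ref{th1} of the paper (proved there by the same estimate, $p^k=|\beta^n+\ep_1\beta^m|\le 2$ if $|\beta|\le 1$), and it is used in the same way, to force $p\mid g(0)$ and $p\mid h(0)$ in any monic factorization $f=gh$ over $\Z$. Where you genuinely diverge is the second half. The paper stays on the archimedean/combinatorial side: it treats $n=2$ separately by rational roots, and for $n\ge 3$ it forms $g(x)=x^{s}f_1(x^{-1})f_2(x)=\sum b_ix^i$ and compares second-highest terms in $g(x)\tilde g(x)=x^nf(x)f(x^{-1})$; there the coefficient of $x^{2n-1}$ is $\ep_1$, while in the factored expression that coefficient is $b_nb_{j_r}$, $b_0b_{j_1}$, or their sum, hence divisible by $p$ since $p\mid b_0,b_n$ --- a contradiction. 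You instead reduce modulo $p$: $\bar f(x)=x\,(x^{n-1}+\bar\ep_1)$ with $\bar\ep_1\ne 0$ in $\F_p$, so $x$ divides $\bar f$ exactly once, whereas $p\mid g(0)$ and $p\mid h(0)$ would force $x^2\mid \bar g\bar h=\bar f$. This route is shorter, handles $n=2$ uniformly rather than as a separate case, avoids the reciprocal-polynomial bookkeeping, and makes transparent exactly where $m=1$ enters (the linear coefficient is a unit mod $p$; for $m\ge 2$ the factor $x^m$ of $\bar f$ can split between $\bar g$ and $\bar h$, consistent with the paper's examples such as $x^{3n}+\ep_1x^{2n}+4\ep_1$), while the paper's coefficient comparison is in the Selmer--Ljunggren spirit used elsewhere in the article. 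Your sanity check at $(p,k)=(2,1)$ and the Newton-polygon variant you sketch are both accurate.
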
 
For arbitrary $m$ there are, indeed, polynomials which are reducible. For example, 
\begin{align}
& x^5-x^2+4=(x^2+x+2)(x^3-x^2-x+2); \notag\\
& x^5-x^4+9=(x^2-3x+3)(x^3+2x^2+3x+3).\notag
\end{align}
More generally, $$x^{3n}+\ep_1x^{2n}+4\ep_1=(x^n+2\ep_1)(x^{2n}-\ep_1x^n+2),$$
 for every $n\ge 1$. Although $f(x)$ is reducible for $m>1$, we will show that $f(x)$ can not have more than $k$ factors. More precisely, 
\begin{theorem}\label{gen}
Suppose $p$ is a prime and $f(x)=x^n+\ep_1x^m+\ep_2 p^k$ with $\ep_i\in\{\, -1, +1\,\}$ be a polynomial of degree $n$ and $k\ge 2$. Then $f(x)$ is a product of atmost $k$ distinct non-reciprocal irreducible polynomials. 
\end{theorem}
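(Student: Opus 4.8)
The plan is to bound the number of irreducible factors by controlling how the prime $p$ distributes among the constant terms of those factors. Write $f(x) = g_1(x)\cdots g_r(x)$ as a product of irreducible polynomials in $\Z[x]$, each with positive leading coefficient; since $f$ is monic, each $g_i$ is monic. Looking at the constant terms, $\prod_i g_i(0) = \pm p^k$, so $g_i(0) = \pm p^{a_i}$ with $a_1 + \cdots + a_r = k$. The first key step is to show that at most one of the $a_i$ can be zero: if $g_i(0) = \pm 1$ for two distinct indices, then $f$ would have a reciprocal factor of degree $\ge 1$ that is not cyclotomic, or else a product of two factors each with unit constant term; the cleaner route is to invoke the structure from Theorem~\ref{thm:Lju}'s method — a factor with $g_i(0) = \pm 1$ whose reciprocal $x^{\deg g_i} g_i(1/x)$ also divides $f$ forces $g_i$ to be cyclotomic, but one checks directly (reducing $f \bmod p$, where $f \equiv x^n + \ep_1 x^m = x^m(x^{n-m}+\ep_1)$) that $f$ has no cyclotomic factor other than possibly powers of $x\pm 1$, which are excluded since $f(0) = \pm p^k \ne 0, \pm 1$ and $f(\pm 1) \ne 0$ for $k \ge 2$. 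Hence at most one $g_i$ has $a_i = 0$.

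The second step handles the factors with $a_i \ge 1$. I would argue that in fact at most one factor can have $a_i = 0$ AND moreover no factor is reciprocal, then count: if all $r$ factors had $a_i \ge 1$ we would immediately get $r \le k$; the subtlety is the single possible factor with unit constant term. To eliminate it, reduce modulo $p$: $f(x) \equiv x^m(x^{n-m} + \ep_1) \pmod p$, and the factors $g_i \bmod p$ multiply to this. A factor $g_i$ with $g_i(0) = \pm 1$ reduces to something with nonzero constant term, hence $\bar g_i \mid x^{n-m}+\ep_1$ in $\F_p[x]$; but one then shows such a $g_i$, being monic with $|g_i(0)| = 1$ and dividing a polynomial all of whose complex roots lie on the unit circle up to the perturbation — more carefully, one uses that any monic integer polynomial dividing $x^n + \ep_1 x^m + \ep_2 p^k$ with constant term $\pm 1$ must be reciprocal (this is exactly the non-reciprocal/reciprocal dichotomy of Theorem~\ref{thm:Lju}, since a non-reciprocal irreducible factor pairs with its reciprocal which has constant term $\pm p^k$, impossible here), and then cyclotomic, contradicting the previous paragraph. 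Therefore every $g_i$ has $a_i \ge 1$, giving $r \le a_1 + \cdots + a_r = k$.

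For distinctness, I would show $f$ is separable: $\gcd(f, f') = 1$ in $\Q[x]$. Compute $f'(x) = nx^{n-1} + \ep_1 m x^{m-1} = x^{m-1}(nx^{n-m} + \ep_1 m)$. A common root $\alpha$ of $f$ and $f'$ satisfies $\alpha \ne 0$ (since $f(0)\ne 0$), so $n\alpha^{n-m} = -\ep_1 m$, i.e. $\alpha^{n-m} = -\ep_1 m/n$; substituting into $\alpha^n = -\ep_1\alpha^m - \ep_2 p^k$ gives $\alpha^m(-m/n) = -\ep_1\alpha^m - \ep_2 p^k$, so $\alpha^m (1 - m/n)(-\ep_1) = \ep_2 p^k$ up to signs, forcing $|\alpha|$ to take a specific value; pairing this with $|\alpha|^{n-m} = m/n < 1$ yields $|\alpha| < 1$, but then $|\alpha^n + \ep_1\alpha^m| < 2 < p^k$ would be needed to vanish against $p^k$ — a contradiction for $p^k \ge 4$. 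Hence $f$ is squarefree and the $g_i$ are distinct.

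The main obstacle I expect is the second step: rigorously ruling out a factor with unit constant term. The modular reduction gives the shape of the factorization mod $p$ but not immediately the reciprocity, so the argument genuinely needs the non-reciprocal/reciprocal trichotomy from Ljunggren's theorem together with the (easy but essential) verification that $f$ has no nontrivial cyclotomic factor — the latter being where the hypothesis $k \ge 2$, which makes $f(\pm 1) \ne 0$ and $|f(0)| > 1$, does the work.
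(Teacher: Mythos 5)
Your counting step has a genuine gap, and you identified its location yourself: ruling out a factor with unit constant term. The chain you propose — ``$|g_i(0)|=1$ forces $g_i$ reciprocal (by Theorem~\ref{thm:Lju}), hence cyclotomic, hence excluded by reduction mod $p$'' — does not hold up. First, Theorem~\ref{thm:Lju} is a statement about $x^n+\ep_1x^m+\ep_2$ with $\ep_2\in\{-1,+1\}$; it says nothing about factors of $x^n+\ep_1x^m+\ep_2p^k$, and your supporting claim that a non-reciprocal irreducible factor ``pairs with its reciprocal'' dividing $f$ is unfounded ($f$ is not reciprocal, so there is no reason the reciprocal of a factor divides $f$). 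Second, a monic integer factor with constant term $\pm1$ need not be reciprocal at all: its roots merely have product $\pm 1$ and can be split between the inside and outside of the unit circle. Third, the mod-$p$ reduction $f\equiv x^m(x^{n-m}+\ep_1)$ does not show that the only possible cyclotomic factors are powers of $x\pm1$: for suitable $d$, $p$, the reduction of $\Phi_d(x)$ divides $x^{n-m}+\ep_1$ in $\F_p[x]$, so nothing is excluded this way. The missing idea — which is the paper's key lemma — is a root-location bound: if $|z|\le 1$ and $f(z)=0$, then $p^k=|z^n+\ep_1z^m|\le 2$, impossible for $k\ge 2$; hence every root of $f$ satisfies $|z|>1$. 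This single observation closes everything at once: every monic irreducible factor has $|g_i(0)|=\prod|\text{roots}|>1$, hence $|g_i(0)|\ge p$ and $r\le k$, and no factor can be reciprocal (a monic reciprocal factor has constant term $\pm1$, equivalently its roots would come in pairs $z,1/z$ with one inside the closed unit disc).

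Your separability argument, by contrast, is essentially sound and is a different, more elementary route than the paper's (the paper invokes the trinomial discriminant formula of Theorem~\ref{th0}): from a common root of $f$ and $f'$ you get $|\alpha|^{n-m}=m/n<1$, so $|\alpha|<1$, and then $|\alpha^n+\ep_1\alpha^m|\le 2<p^k$ shows $f(\alpha)\ne 0$ — the garbled intermediate substitution is unnecessary. Note that this is precisely the same triangle-inequality mechanism you need (but did not deploy) for the counting step; applying it to all roots, not just critical points, is what repairs the proof.
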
 
The separability of such polynomials has also been considered there.  Throughout the paper, we will consider the reducibility over $\Q$ (and hence over $\Z$) only. If $n$ is a positive integer, we define $e(n)$ as the largest even part of $n$, i.e. $n=2^{a}n_1$ with $n_1$ odd implies $e(n)=2^a$. 

\section*{Factorization of $x^n+\ep_1x^m+\ep_2$}
Let $f(x)=x^n+\ep_1x^m+\ep_2$ be a polynomial of degree $n$ with $\ep_i\in \{\, -1, +1\,\}$. From Theorem \ref{thm:Lju} $f(x)$ has a cyclotomic factor whenever it is reducible. To determine the reducibility criterion of $f(x)$, it is,  therefore, sufficient to find the cyclotomic factors of  $f(x)$. Before we start, we recall a few basic properties of cyclotomic polynomials which will be useful later.

\begin{pro}\label{pro}
Suppose $n$ is a positive integer and $\Phi_n(x)$ be the $n$th cyclotomic polynomial. 
\begin{enumerate}[label=(\alph*)]
\item Let $p$ be a prime. Then 
\begin{equation*}
\Phi_{pn}(x)=\begin{cases}
\Phi_n(x^p) & \mbox{ if $p|n$}\\
\Phi_n(x^p)/\Phi_n(x) & \mbox{ if $p\nmid n$}.
\end{cases}
\end{equation*}
\item\label{impor} Define $D_n^m=\{\, d\in \N\mid \lcm(m,d)=mn\,\}$. Then $\Phi_n(x^m)=\prod\limits_{d\in D_n^m}\Phi_d(x)$.
\item If $p$ is a prime and $(p,n)=1$ then $\prod\limits_{d|p^{\gamma}n}\Phi_d(x)=\prod\limits_{i=0}^{\gamma}\prod\limits_{d|n}\Phi_{p^id}(x).$ In particular, $x^n+1=x^{2n}-1/x^n-1=\prod\limits_{d|2n,d\nmid n}\Phi_d(x)=\prod\limits_{d|n}\Phi_{2d}(x)$. 
\item If $n,m$ are positive integers, then 
\begin{equation*}
\left( \prod\limits_{d|n}\Phi_d(x), \prod\limits_{d|m}\Phi_d(x)\right)=\prod\limits_{d|(n,m)}\Phi_d(x).
\end{equation*}
\end{enumerate}
\end{pro}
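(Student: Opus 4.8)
This proposition collects standard facts about cyclotomic polynomials, and the plan is to derive all four parts from the single identity $x^N-1=\prod_{d\mid N}\Phi_d(x)$ together with two observations: distinct $\Phi_d$ and $\Phi_{d'}$ have no common complex root (such a root would be simultaneously a primitive $d$-th and $d'$-th root of unity, forcing $d=d'$), and $x^N-1$ is separable over $\Q$. Hence $x^N-1$ factors as a product of the pairwise coprime, squarefree polynomials $\Phi_d$ ($d\mid N$), so every monic divisor $g$ of $x^N-1$ equals $\prod_{d\in S}\Phi_d(x)$ with $S=\{\,d\mid N:\Phi_d\mid g\,\}$; and whether $\Phi_d\mid g$ can be tested on a single primitive $d$-th root of unity.

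For (a), I would put $x\mapsto x^p$ in $x^n-1=\prod_{d\mid n}\Phi_d(x)$, so $\Phi_n(x^p)$ divides $x^{pn}-1$ and therefore $\Phi_n(x^p)=\prod_{e\in S}\Phi_e(x)$ with $S=\{\,e:\Phi_e\mid\Phi_n(x^p)\,\}$. A primitive $e$-th root $\zeta$ satisfies $\Phi_n(\zeta^p)=0$ iff $\zeta^p$ has exact order $n$, i.e. iff $e/(e,p)=n$. When $p\mid n$ this forces $p\mid e$, hence $e=pn$, so $S=\{pn\}$; when $p\nmid n$ it allows $(e,p)\in\{1,p\}$, giving $S=\{n,pn\}$. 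Reading off $\Phi_n(x^p)=\Phi_{pn}(x)$ in the first case and $\Phi_n(x^p)=\Phi_n(x)\Phi_{pn}(x)$ in the second is exactly the claim; the only delicate point is this casework on whether $p\mid n$.

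Part (b) is the same computation carried out for a general exponent $m$, and I would present it in parallel: $\Phi_n(x^m)$ divides $x^{mn}-1$, so $\Phi_n(x^m)=\prod_{d\in S}\Phi_d(x)$ with $S=\{\,d:\Phi_d\mid\Phi_n(x^m)\,\}$, and a primitive $d$-th root $\zeta$ is a root of $\Phi_n(x^m)$ iff $\zeta^m$ has exact order $n$, i.e. iff $d/(d,m)=n$. The key step is to rewrite $d/(d,m)=n$ as $\lcm(m,d)=md/(d,m)=mn$, which identifies $S$ with $D_n^m$ and proves (b); as a check, $\sum_{d\in D_n^m}\varphi(d)=m\varphi(n)=\deg\Phi_n(x^m)$. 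I expect this translation between ``$\zeta^m$ has order $n$'' and ``$\lcm(m,d)=mn$'', together with the attendant $\gcd$/$\lcm$ bookkeeping, to be the one place where care is really needed; the rest is mechanical.

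Finally, for (c) I would observe that $(p,n)=1$ makes $(i,d)\mapsto p^i d$ a bijection from $\{0,\dots,\gamma\}\times\{\,d:d\mid n\,\}$ onto the divisors of $p^\gamma n$, so grouping the factors of $x^{p^\gamma n}-1=\prod_{e\mid p^\gamma n}\Phi_e(x)$ by the exponent of $p$ yields the displayed product; specializing to $p=2$, $\gamma=1$ gives $\prod_{d\mid 2n,\,d\nmid n}\Phi_d(x)=(x^{2n}-1)/(x^n-1)=x^n+1$, and for odd $n$ the divisors of $2n$ not dividing $n$ are precisely $\{\,2d:d\mid n\,\}$. For (d), write $x^n-1=\prod_{d\mid n}\Phi_d(x)$ and $x^m-1=\prod_{d\mid m}\Phi_d(x)$ as products of the pairwise coprime $\Phi_d$; by unique factorization the gcd is the product of the $\Phi_d$ appearing in both, i.e. those with $d\mid n$ and $d\mid m$, which are exactly the $d\mid(n,m)$, so the gcd is $\prod_{d\mid(n,m)}\Phi_d(x)=x^{(n,m)}-1$.
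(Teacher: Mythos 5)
Your proof is correct and complete: all four parts do follow, as you argue, from the squarefree factorization $x^N-1=\prod_{d\mid N}\Phi_d(x)$ together with the fact that $\Phi_d\mid g$ can be tested at a single primitive $d$-th root of unity, and you rightly note that the last identity in (c) requires $n$ odd, consistent with the hypothesis $(p,n)=1$ for $p=2$. The paper itself omits the proof as elementary (citing Thangadurai), so there is no argument to compare against; yours is the standard derivation and fills that gap correctly.
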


Considering the elementary nature we omit the detailed proof. One can look  into Thangadurai \cite{RT} for the same.\\ 

The polynomial $f(x)=x^n+\ep_1x^m+\ep_2 $ is reducible if and only if $\ep_2x^nf(x^{-1})=x^n+\ep_1\ep_2x^{n-m}+\ep_2$ is reducible. Therefore, for a given $n$  it is sufficient to consider the reducibility of polynomials $f(x)=x^n+\ep_1x^m+\ep_2$ with $n\ge 2m$. Throughout the section, we will consider $m=2^a\cdot 3^b\cdot M, n-2m=2^p\cdot 3^q\cdot N$ as the prime factorization of $m$ and $n-2m$ respectively. 
\begin{theorem}\label{tri1}
Let $f(x)=x^n-x^m-1$ be reducible. Then $q>b, e(m)>e(n-2m)$ and $f(x)$ is divisible by $\Phi_6(x^{(n,m)})$.
\end{theorem}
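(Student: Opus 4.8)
The plan is to exploit Theorem~\ref{thm:Lju}: since $f(x)=x^n-x^m-1$ is reducible, it must have a cyclotomic factor, say $\Phi_r(x)$, so some primitive $r$th root of unity $\zeta$ satisfies $\zeta^n-\zeta^m-1=0$. First I would observe that $1,\zeta^m,\zeta^n$ are three complex numbers of modulus $1$ summing (after sign adjustment) to zero, so they form an equilateral triangle on the unit circle; concretely $\zeta^n-\zeta^m=1$ forces $\{\zeta^m,\zeta^n\}$ to be a pair of primitive $6$th (or $3$rd/$6$th) roots of unity, and a short case check pins down $\zeta^m=\omega$, $\zeta^n=-\bar\omega$ where $\omega=e^{2\pi i/6}$ is a primitive $6$th root of unity. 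Equivalently $\zeta^m$ is a primitive $6$th root of unity and $\zeta^{n-2m}=\zeta^n\zeta^{-2m}$ is then forced to be a primitive $3$rd root of unity (this is where the ``$n-2m$'' quantity enters naturally).

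Next I would translate these two root conditions into divisibility statements using Proposition~\ref{pro}\ref{impor}: $\zeta^m$ being a primitive $6$th root of unity means $r\in D_6^{\,m}$, i.e.\ $\lcm(m,r)=6m$, hence $\Phi_r(x)\mid \Phi_6(x^m)$; similarly $\zeta^{n-2m}$ being a primitive cube root of unity gives $\Phi_r(x)\mid \Phi_3(x^{\,n-2m})$. Writing $m=2^a3^bM$ and $n-2m=2^p3^qN$, the condition $\lcm(m,r)=6m$ says precisely that $r$ supplies exactly one extra factor of $2$ and one extra factor of $3$ beyond what $m$ already contributes, i.e.\ $2^{a+1}\parallel \lcm(m,r)$ and $3^{b+1}\parallel \lcm(m,r)$; the analogous condition for $\Phi_3(x^{n-2m})$ forces $3^{q+1}\parallel \lcm(n-2m,r)$ while $2$ is unconstrained from above by $3$. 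Comparing the power of $3$ dividing $r$ obtained from the two conditions yields $q>b$ (the cube-root condition demands $r$ carry a factor $3^{q+1}$, while the $6$th-root condition only allows $r$ to carry $3^{b+1}$ unless $q+1>b+1$); comparing the power of $2$ gives that $r$ contributes a factor $2$ on top of $e(m)$ but that $n-2m$ cannot already contain that much, which is exactly $e(m)>e(n-2m)$. The two extremal conditions together pin $\zeta$ down so tightly that in fact $\zeta^{(n,m)}$ must itself be a primitive $6$th root of unity, giving $\Phi_6(x^{(n,m)})\mid f(x)$; the containment $\Phi_6(x^{(n,m)})\supseteq$ all such $\Phi_r$ follows because $(n,m)$ captures the common part and $\Phi_6(x^{(n,m)})=\prod_{d}\Phi_d(x)$ over the relevant $d$'s by Proposition~\ref{pro}\ref{impor}, with each surviving $\Phi_r$ among them.

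The main obstacle I anticipate is the bookkeeping in the last step: showing not merely that \emph{some} cyclotomic factor exists with the stated $2$-adic and $3$-adic properties, but that the \emph{specific} polynomial $\Phi_6(x^{(n,m)})$ divides $f(x)$ — equivalently, that \emph{every} primitive $6(n,m)/\gcd$-type root actually satisfies $f=0$, not just one. This requires checking that the Galois conjugates are compatible: if $\zeta$ is a root of $f$ that is a root of unity, its conjugates are $\zeta^j$ for $j$ coprime to $r$, and one must verify the equilateral-triangle relation is preserved, which forces $r$ to be essentially $6(n,m)$ divided by controllable factors, after which $\Phi_6(x^{(n,m)})\mid f(x)$ drops out. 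I would handle this by reducing mod the prime $3$ (or working in $\mathbb{Q}(\omega)$) to get clean congruence conditions $n\equiv \cdots$, $m\equiv\cdots$ that are automatically conjugation-invariant. The inequalities $q>b$ and $e(m)>e(n-2m)$ should then be immediate consequences of those congruences, and conversely will be shown (in later results, presumably) to be essentially sufficient.
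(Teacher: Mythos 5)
Your overall strategy --- start from a root of unity $\zeta$ of $f$ guaranteed by Theorem~\ref{thm:Lju}, identify which roots of unity $\zeta^m$ and $\zeta^n$ must be, and convert that into $\lcm$ conditions via Proposition~\ref{pro} --- is viable and close in spirit to what the paper does. But your case check is wrong, and the error is fatal to the bookkeeping that follows. From $\zeta^n=\zeta^m+1$ with $|\zeta^m|=|\zeta^n|=1$ one gets $|\zeta^m+1|=1$, hence $\operatorname{Re}(\zeta^m)=-\tfrac12$: so $\zeta^m$ is a primitive \emph{third} root of unity, $\zeta^n=\zeta^m+1$ is a primitive \emph{sixth} root, and $\zeta^{n-2m}=\zeta^n\zeta^{-2m}=-1$. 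You assert instead that $\zeta^m$ is a primitive sixth root and $\zeta^{n-2m}$ a primitive third root; your explicit assignment $\zeta^m=\omega$, $\zeta^n=-\bar\omega$ does not even satisfy $\zeta^n-\zeta^m-1=0$ (it gives $\zeta^n-\zeta^m=-1$), and the identification you describe belongs to the trinomial $x^n-x^m+1$, not to $x^n-x^m-1$. The correct divisibility statements are $\Phi_r(x)\mid\Phi_3(x^m)=x^{2m}+x^m+1$ and $\Phi_r(x)\mid x^{n-2m}+1$, i.e.\ $\lcm(m,r)=3m$ and $\lcm(n-2m,r)=2(n-2m)$; writing $v_2,v_3$ for the $2$- and $3$-adic valuations, these force $v_3(r)=b+1\le q$ and $v_2(r)=p+1\le a$, which is exactly $q>b$ and $e(m)>e(n-2m)$. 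If instead you carry out the valuation comparison honestly from \emph{your} premises ($\lcm(m,r)=6m$ and $\lcm(n-2m,r)=3(n-2m)$), you get $v_3(r)=b+1=q+1$ and $v_2(r)=a+1\le p$, i.e.\ $q=b$ and $e(n-2m)>e(m)$ --- the opposite of the theorem --- so the inequalities in your write-up are reached by assertion, not derivation.

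The final claim, that the specific polynomial $\Phi_6(x^{(n,m)})$ divides $f$, is also left as a plan (``reduce mod $3$'') rather than a proof, and you correctly flag it as the main obstacle. The paper closes this gap by a concrete computation: a common root $\alpha$ of $f$ and its reciprocal satisfies both $x^{n-2m}+1=0$ and $x^{2m}+x^m+1=0$, so every reciprocal factor of $f$ divides $g(x)=\gcd\left(x^{n-2m}+1,\,x^{2m}+x^m+1\right)$; expanding both polynomials into cyclotomic factors via Proposition~\ref{pro} shows $g$ is trivial unless $q>b$ and $e(m)>e(n-2m)$, and in that case $g(x)=\prod_{d\mid (N,M)}\Phi_{2^{p+1}3^{b+1}d}(x)=\Phi_6(x^{(n,m)})$, using $(n,m)=(n-2m,m)=2^p3^b(N,M)$. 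If you repair the case check and then run your $\lcm$ analysis over all admissible $r$ simultaneously, you will in effect reproduce this gcd argument, which is the cleanest way both to obtain the inequalities and to pin the reciprocal part down to $\Phi_6(x^{(n,m)})$.
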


\begin{proof}
 Since $f(x)$ is reducible, from Theorem~\ref{thm:Lju} $f(x)$ 
has a 
 reciprocal factor. Consequently, there exits an $\al\in \C,$ where $\al \ne \pm 
1,0$ such that both $\al$ and 
 $\frac{1}{\al}$ are roots of $f(x).$ That is 
 $$\al^n-\al^m-1=0=\al^n+\al^{n-m}-1.$$ 
In other words,  $\al$ is a root of the polynomial 
$x^{n-2m}+1.$ This evenually implies $f(x)$ is irreducible for $n=2m$. 
So we need to consider $n>2m$ for the remaining part.  Since  $\alpha$ satisfies $x^{n-2m}+1=0$ and $x^n-x^m-1=0$, it would satisfy 
$x^{2m}+x^m+1=0$. In particular, $\alpha$ is a root of  $g(x)=\gcd(x^{n-2m}+1, 
x^{2m}+x^m+1).$

From Proposition \ref{pro}, it can be seen that  $x^{2m}+x^m+1=\prod\limits_{d \in D_3^m} \Phi_d(x),$ 
where $D_3^m=\{d\in \N|\lcm(m,d)=3m\}.$ If we consider the prime factorizations of $m$ and $n-2m$, then we have $x^{n-2m}+1= \prod\limits_{i=0}^{q} \prod\limits_{d|N} 
\Phi_{2^{p+1}3^id}(x)$ and
\begin{equation*}
\Phi_3(x^m)=\prod\limits_{i=0}^a \prod\limits_{d|M} \Phi_{2^i 3^{b+1}d}(x).
\end{equation*}
Let $n$ be odd so that $n-2m$ odd or  equivalently $p=0.$ Hence
\begin{eqnarray*}
 g(x) &=& \left( \prod\limits_{i=0}^a \prod\limits_{d|M} \Phi_{2^i 
3^{b+1}d}(x), \prod\limits_{i=0}^{q} \prod\limits_{d|N} \Phi_{2.3^id}(x)
\right) = \left( \prod\limits_{d|M} \Phi_{2. 3^{b+1}d}(x), 
\prod\limits_{i=0}^{q} \prod\limits_{d|N} \Phi_{2.3^id}(x) \right)\\
&=& \begin{cases}
\prod\limits_{d|d_1} \phi_{2.3^{b+1}d}(x) \, , \mbox{ if $q > b$, 
where $d_1=(N,M)$}\\
1\, , \quad \qquad \quad\mbox{ otherwise.}
\end{cases}
\end{eqnarray*}
If $q \ge b+1$ then $n=2^{a+1}3^{b}M+3^{q}N=3^{b}u_3$ 
where $u_3$ odd and $3\nmid u_3$.\\
Also, $(n-2m,m)=(n,m)=3^{b}d_1$ gives $\prod\limits_{d|d_1} \phi_{2.3^{b+1}d}(x)
=\prod\limits_{d|d_1} \phi_{6.3^bd}(x)=\Phi_6(x^{(n,m)}).$

On the other hand, if $n$ is even then $n-2m$ is even and $p \ge 1$.  Then 
\begin{eqnarray*}
g(x) &=& \left(  \prod\limits_{i=0}^a \prod\limits_{d|M} \Phi_{2^i 
3^{b+1}d}(x), \prod\limits_{i=0}^{q} \prod\limits_{d|N} 
\Phi_{2^{p+1}.3^id}(x) \right)\\
&=& \begin{cases}
\prod\limits_{d|d_2} \Phi_{2^{p+1}.3^{b+1}d}(x)\, , \mbox{ if $a>p, q> b$, where $d_2=(N,M)$}\\
1\, , \qquad \qquad \qquad \mbox{ otherwise.}
\end{cases} \notag
\end{eqnarray*}
If $a\ge p+1, q\ge b+1$ then 
$n=2^{a+1}3^{b}M+2^{p}3^{q}N=2^{p} 3^{b}u_4$ where 
$(u_4,6)=1$.
\end{proof}

\begin{cor}
If $n=2^a3^b$ with $a+b>0$ then $x^n-x^m-1$ is irreducible for every $m<n$. 
\end{cor}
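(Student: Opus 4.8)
The plan is to argue by contradiction, using Theorem~\ref{thm:Lju} to force a root of unity among the roots of $f$ and then showing that such a root is incompatible with $n=2^a3^b$; this is the same mechanism that underlies Theorem~\ref{tri1}. So assume $n=2^a3^b$ with $a+b>0$, fix $m$ with $1\le m<n$, and suppose $f(x)=x^n-x^m-1$ is reducible. Then $f$ has at least two irreducible factors, and since at most one of them is non-reciprocal, $f$ has a reciprocal factor; by Theorem~\ref{thm:Lju} that factor is a product of cyclotomic polynomials, so some $\Phi_\ell(x)$ divides $f(x)$. Let $\zeta$ be a primitive $\ell$th root of unity, so $\zeta$ is a root of unity with $f(\zeta)=0$.

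First I would pin down the behaviour of $\zeta$ at the two exponents. From $f(\zeta)=0$ we get $\zeta^{\,n}=\zeta^{\,m}+1$, and since $|\zeta|=1$ this forces $|\zeta^{\,m}+1|=1$. The only points of the unit circle at distance $1$ from $-1$ are the two primitive cube roots of unity, so $\zeta^{\,m}$ has order $3$; consequently $\zeta^{\,n}=\zeta^{\,m}+1$ is a primitive sixth root of unity (indeed $(\zeta^{\,n})^2=\zeta^{\,m}$), so $\zeta^{\,n}$ has order $6$. In other words, a reciprocal factor of $x^n-x^m-1$ must "detect" $\Phi_6$, which is exactly the content of Theorem~\ref{tri1}.

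Next I would convert these orders into arithmetic constraints. Put $\ell=\operatorname{ord}(\zeta)$ and $g=\gcd(\ell,n)$. Since $\zeta^{\,n}$ has order $6$ we have $\ell/g=6$, so $\ell=6g$ and $g\mid n$; writing $n=g n_1$ and using $\gcd(g n_1,6g)=g\gcd(n_1,6)$ in the equality $\gcd(\ell,n)=g$ yields $\gcd(n_1,6)=1$. But $n_1\mid n=2^a3^b$, hence $n_1=1$ and $\ell=6n$. Finally, $\zeta^{\,m}$ has order $3$, so $3=\ell/\gcd(\ell,m)=6n/\gcd(6n,m)$, i.e. $\gcd(6n,m)=2n$; in particular $2n\mid m$, which is impossible for $1\le m<n$. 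This contradiction proves that $f$ is irreducible.

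I expect no essential obstacle here: once Theorem~\ref{thm:Lju} is in hand the argument is almost mechanical, and the only step carrying real content is the reduction $n_1=1$, which is exactly where the hypothesis $n=2^a3^b$ is used. For a general $n$ one extracts only that $\operatorname{ord}(\zeta^{\,n})=6$ with $n/\gcd(\ell,n)$ coprime to $6$, and this need not be trivial (for $x^{50}-x^4-1$ one has $n/\gcd(\ell,n)=25$). Note also that $n=2m$ requires no separate handling, since there $\zeta^{\,n}=(\zeta^{\,m})^2$ would have order $3$ and order $6$ at once.
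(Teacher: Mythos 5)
Your argument is correct, and it reaches the corollary by a genuinely more direct route than the paper. The paper obtains this statement as a consequence of Theorem~\ref{tri1}: there, reducibility plus Theorem~\ref{thm:Lju} produces a common root of $x^{n-2m}+1$ and $x^{2m}+x^m+1$, and the conclusion is extracted from an explicit $\gcd$ computation with cyclotomic factorizations (Proposition~\ref{pro}), tracking the $2$- and $3$-adic valuations of $m$ and $n-2m$; in particular one finds that reducibility forces $n=3^bu_3$ or $n=2^p3^bu_4$ with $u_i>1$ coprime to $6$, which is incompatible with $n=2^a3^b$. You instead go straight from Theorem~\ref{thm:Lju} to a root of unity $\zeta$ with $f(\zeta)=0$, use the unit-circle computation $|\zeta^m+1|=1\Rightarrow\operatorname{Re}(\zeta^m)=-\tfrac12$ to force $\operatorname{ord}(\zeta^m)=3$ and $\operatorname{ord}(\zeta^n)=6$, and then a short order/gcd argument ($\ell=6\gcd(\ell,n)$, $\gcd(n/\gcd(\ell,n),6)=1$, hence $\ell=6n$ and $2n\mid m$) yields the contradiction; all steps check out, including the reduction $n_1=1$ where the hypothesis $n=2^a3^b$ enters. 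Your inference ``reducible $\Rightarrow$ reciprocal factor'' from Theorem~\ref{thm:Lju} is exactly the same reading the paper uses in the proof of Theorem~\ref{tri1}, so it is not a gap relative to the paper. The trade-off: your argument is self-contained, elementary, and works uniformly for all $1\le m<n$ without the $n\ge 2m$ normalization, but it only rules out cyclotomic factors under the special hypothesis on $n$; the paper's heavier machinery is what delivers the finer information for general $n$ (the precise factor $\Phi_6(x^{(n,m)})$ and the valuation conditions $q>b$, $e(m)>e(n-2m)$) of which this corollary is a by-product.
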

Since the proof for the remaining three families are almost same, instead of duplicating we state them without proof. The detailed proof can be carried out by using Proposition \ref{pro} and Theorem \ref{tri1}.
\begin{theorem}
Let $f(x)=x^n+\ep_1x^m-\ep_1$ be reducible with $\ep_1\in \{\, -1, +1\,\}$. Then $f(x)$ is divisible by $\Phi_6(x^{(n,m)})$ and the following holds:
\begin{multicols}{2}
\begin{enumerate}[label=(\alph*)]
\item $\ep_1=1, e(m)=e(n-2m), q>b$; 
\item $\ep_1=-1, e(m)<e(n-2m), q>b$.
\end{enumerate}
\end{multicols}
\end{theorem}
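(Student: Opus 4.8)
The plan is to copy the proof of Theorem~\ref{tri1} essentially verbatim, replacing the auxiliary polynomial $x^{2m}+x^m+1=\Phi_3(x^m)$ by $x^{2m}-x^m+1=\Phi_6(x^m)$ and keeping track of the sign in $x^{n-2m}\pm1$. As there, replacing $m$ by $n-m$ does not affect reducibility, so one may assume $n\ge 2m$; the case $n=2m$ is immediate (for $\ep_1=1$ a reciprocal root would have to satisfy $x^{0}+1=0$, so $f$ is irreducible, while for $\ep_1=-1$ we have $f=\Phi_6(x^m)=\Phi_6(x^{(n,m)})$), so assume $n>2m$ and write $m=2^a3^bM$, $n-2m=2^p3^qN$ with $(M,6)=(N,6)=1$.

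Assume $f$ is reducible. By Theorem~\ref{thm:Lju} it has a nonconstant reciprocal factor, and since none of $0,1,-1$ is a root of $f=x^n+\ep_1 x^m-\ep_1$, that factor has a root $\al\in\C\setminus\{0,\pm1\}$ with $\al^{-1}$ also a root. From $f(\al)=0$, namely $\al^n+\ep_1\al^m-\ep_1=0$, and $-\ep_1\al^n f(\al^{-1})=0$, namely $\al^n-\al^{n-m}-\ep_1=0$, subtraction gives $\al^m(\al^{n-2m}+\ep_1)=0$, hence $\al^{n-2m}=-\ep_1$; substituting $\al^{n}=-\ep_1\al^{2m}$ into $f(\al)=0$ gives $\al^{2m}-\al^m+1=0$. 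Thus $\al$ is a common root of $x^{n-2m}+\ep_1$ and $\Phi_6(x^m)$, so $g(x):=\gcd\bigl(x^{n-2m}+\ep_1,\ \Phi_6(x^m)\bigr)$ is nonconstant.

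The heart of the proof is then to compute $g$, just as in Theorem~\ref{tri1}. By Proposition~\ref{pro}\ref{impor}, the relation $\lcm(m,d)=6m$ forces $v_2(d)=a+1$ and $v_3(d)=b+1$ and makes the prime-to-$6$ part of $d$ divide $M$, so $\Phi_6(x^m)=\prod_{d\mid M}\Phi_{2^{a+1}3^{b+1}d}(x)$; by Proposition~\ref{pro}(c), $x^{n-2m}+1=\prod_{i=0}^{q}\prod_{d\mid N}\Phi_{2^{p+1}3^{i}d}(x)$ and $x^{n-2m}-1=\prod_{j=0}^{p}\prod_{i=0}^{q}\prod_{d\mid N}\Phi_{2^{j}3^{i}d}(x)$. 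Since distinct cyclotomic polynomials are coprime, $g$ is the product of the factors common to the two lists; this product is nonconstant precisely when $q>b$ and, in addition, $a=p$ when $\ep_1=1$ (respectively $a<p$ when $\ep_1=-1$), which are exactly conditions (a) and (b) in the $e(\cdot)$-notation, and in that case $g(x)=\prod_{d\mid(M,N)}\Phi_{2^{a+1}3^{b+1}d}(x)$. A short $2$-adic and $3$-adic valuation count on $n=2^{a+1}3^bM+2^p3^qN$, using $q>b$ and the relation between $a$ and $p$, gives $(n,m)=2^a3^b(M,N)$; applying Proposition~\ref{pro}\ref{impor} once more identifies the last product with $\Phi_6\bigl(x^{(n,m)}\bigr)$, so $g(x)=\Phi_6(x^{(n,m)})$. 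It remains to check that this polynomial actually divides $f$: every root $\beta$ of $g$ satisfies $\beta^{n-2m}=-\ep_1$ and $\beta^{2m}-\beta^m+1=0$, hence $f(\beta)=-\ep_1\beta^{2m}+\ep_1\beta^m-\ep_1=-\ep_1(\beta^{2m}-\beta^m+1)=0$, and since $g$ is squarefree (a product of distinct cyclotomic polynomials) it follows that $g\mid f$.

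The one place where care is needed is the valuation bookkeeping in the last paragraph: keeping the two indexing variables straight when matching the cyclotomic lists, and pinning down $v_2(n)$ (for $\ep_1=-1$ with $p=a+1$ it can be strictly larger than $a+1$, though this does not change $(n,m)$). Since this bookkeeping has already been carried out for $x^n-x^m-1$ in Theorem~\ref{tri1}, one may simply transcribe it with the sign changes noted above, and no genuinely new difficulty should arise.
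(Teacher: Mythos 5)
Your proposal is correct and is exactly the argument the paper intends: the paper omits this proof, stating only that it follows by the method of Theorem~\ref{tri1} together with Proposition~\ref{pro}, and your transcription of that method (replacing $x^{2m}+x^m+1$ by $\Phi_6(x^m)$, tracking $\al^{n-2m}=-\ep_1$, and matching the cyclotomic factor lists to get $a=p$ resp.\ $a<p$ together with $q>b$ and $g=\Phi_6(x^{(n,m)})$) carries it out faithfully, including the correct use of $(n,m)=(n-2m,m)$. No substantive gap; only note that the standing convention $n\ge 2m$ of this section is what legitimizes your opening reduction, since for $\ep_1=1$ the reciprocal transform lands in the family $x^n-x^{m'}-1$ rather than back in $x^n+x^{m'}-1$.
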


\begin{theorem}
Let $f(x)=x^n+x^m+1$ be reducible. Then $f(x)$ is divisible by $\Phi_3(x^{(n,m)})$ and either of the following holds necessarily 
\begin{multicols}{2}
\begin{enumerate}[label=(\alph*)]
\item $n=2m, M>1$; \item $n\ne 2m, q>b$.
\end{enumerate}
\end{multicols}
\end{theorem}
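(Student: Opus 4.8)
The plan is to mirror the argument given for Theorem~\ref{tri1}. Since $f$ is reducible, Theorem~\ref{thm:Lju} provides a nonconstant reciprocal factor of $f$; because $f(\pm1)\ne0$, this factor is divisible by some $\Phi_d$ with $d\ge3$, so I get an $\al\in\C$ with $\al\ne0,\pm1$ and both $\al$ and $\al^{-1}$ roots of $f$. From $\al^n+\al^m+1=0$ together with $1+\al^{n-m}+\al^n=0$ (the latter is $f(\al^{-1})=0$ multiplied by $\al^n$), subtracting gives $\al^m=\al^{n-m}$, i.e.\ $\al^{n-2m}=1$. I would first dispose of the case $n=2m$: there this relation is vacuous, but $f(x)=x^{2m}+x^m+1=\Phi_3(x^m)=\prod_{d\in D_3^m}\Phi_d(x)$ by Proposition~\ref{pro}, and this is reducible precisely when $|D_3^m|>1$, i.e.\ when $m$ is not a power of $3$ (so $M>1$ or $a>0$, cf.\ (a)); moreover $(n,m)=m$ gives $\Phi_3(x^{(n,m)})=\Phi_3(x^m)=f(x)$, so the asserted divisibility is automatic. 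Hence from now on I may assume $n>2m$.

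Using $\al^{n-2m}=1$, the relation $f(\al)=0$ becomes $\al^{2m}+\al^m+1=0$, so $\al$ is a common root of $x^{n-2m}-1$ and of $\Phi_3(x^m)$, hence a root of $g(x):=\bigl(x^{n-2m}-1,\,\Phi_3(x^m)\bigr)$. I would compute $g$ exactly as in Theorem~\ref{tri1}: by Proposition~\ref{pro}, $x^{n-2m}-1=\prod_{i=0}^{p}\prod_{j=0}^{q}\prod_{d\mid N}\Phi_{2^i3^jd}(x)$ and $\Phi_3(x^m)=\prod_{i=0}^{a}\prod_{d\mid M}\Phi_{2^i3^{b+1}d}(x)$; since distinct cyclotomic polynomials are coprime, $g$ is the product of the $\Phi_e$ common to both lists, and a factor $\Phi_{2^i3^{b+1}d}$ of $\Phi_3(x^m)$ (with $i\le a$, $d\mid M$) divides $x^{n-2m}-1$ only if $i\le p$, $b+1\le q$, and $d\mid N$. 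Hence $g(x)=1$ unless $q>b$; as $\al$ is a root of $g$ this forces $q>b$ (which is case~(b)), and then $g(x)=\prod_{i=0}^{\min(a,p)}\prod_{d\mid(M,N)}\Phi_{2^i3^{b+1}d}(x)$.

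Finally I have to recognise $g$ as $\Phi_3(x^{(n,m)})$ and show the latter divides $f$. Since $(n,m)=(n-2m,m)=(2^p3^qN,\,2^a3^bM)$ and $q>b$, one gets $(n,m)=3^b\,2^{\min(a,p)}\,(M,N)$, so by Proposition~\ref{pro} indeed $\Phi_3(x^{(n,m)})=\prod_{i=0}^{\min(a,p)}\prod_{d\mid(M,N)}\Phi_{2^i3^{b+1}d}(x)=g(x)$. For the divisibility, I would take any root $\beta$ of $\Phi_3(x^{(n,m)})$, so $\zeta:=\beta^{(n,m)}$ is a primitive cube root of unity; writing $n=(n,m)n'$ and $m=(n,m)m'$, I check that $3\nmid m'$ (because $v_3(m)=v_3((n,m))=b$) while $n'-2m'=(n-2m)/(n,m)=2^{\,p-\min(a,p)}\,3^{\,q-b}\,N/(M,N)\equiv0\pmod3$; then $\beta^n+\beta^m+1=\zeta^{2m'}+\zeta^{m'}+1=0$, so $\Phi_3(x^{(n,m)})\mid f(x)$.

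The conceptual part is immediate, so the main obstacle will be the index bookkeeping in the last step — establishing the identity $(n,m)=3^b2^{\min(a,p)}(M,N)$ and the congruence $n'\equiv2m'\pmod3$ that promote $\Phi_3(x^{(n,m)})$ from ``a polynomial sharing one root with $f$'' to an actual divisor of $f$. This is the exact analogue of the closing paragraph of the proof of Theorem~\ref{tri1}, with $x^{n-2m}+1$ replaced by $x^{n-2m}-1$ and $\Phi_6$ by $\Phi_3$, and the parity split used there is not even needed here.
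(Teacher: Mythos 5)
Your argument is correct and is exactly the route the paper intends: this theorem is stated without proof, with the remark that it follows by the method of Theorem~\ref{tri1} and Proposition~\ref{pro}, and your proof is precisely that adaptation ($x^{n-2m}+1$ replaced by $x^{n-2m}-1$, $\Phi_6$ by $\Phi_3$), supplemented by the explicit check that \emph{every} root of $\Phi_3(x^{(n,m)})$ annihilates $f$ --- a step the model proof of Theorem~\ref{tri1} leaves implicit and which you carry out correctly via $3\nmid m'$ and $n'\equiv 2m'\pmod 3$.

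One point should be made explicit rather than hidden in a parenthesis: in the case $n=2m$ your computation shows that reducibility of $\Phi_3(x^m)$ forces $m$ not to be a power of $3$, i.e.\ $a>0$ \emph{or} $M>1$, and this weaker condition cannot be upgraded to the printed condition (a) ``$M>1$'': indeed $x^4+x^2+1=(x^2+x+1)(x^2-x+1)$ is reducible with $m=2$, so $a=1$, $M=1$. Thus your version is the correct necessary condition (consistent with the paper's own summary table, which refers the $n=2m$ case back to Proposition~\ref{pro}\ref{impor}), and part (a) of the statement should be read, or amended, accordingly; your proof establishes everything else as claimed.
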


If we summarize all the results of this section, it fits perfectly within the below table.
\begin{description}
 \item[If $n=2m$] then $x^n\pm x^m-1$ are irreducible. And 
$x^n- x^m+1=\Phi_6(x^m)$, $x^n+x^m+1=\Phi_3(x^m)$  are reducible or 
irreducible according
to Proposition~\ref{pro}\ref{impor}.
\item[If $n\ne 2m$] then  \begin{enumerate}
\item If $m$ is odd then $x^n-x^m-1$ is irreducible.
\item If $m+n$ is odd then $x^n+x^m-1$ is irreducible.
\item If $n$ is odd then $x^n-x^m+1$ is irreducible.
\end{enumerate}
\end{description}
The following table summarizes the irreducibility of all polynomials for $n\ne 2m.$
Suppose $m=2^a\cdot 3^b\cdot M$, $n-2m=2^p\cdot 3^q\cdot N.$ And $F$ is one of the nontrivial 
reciprocal 
factor of $x^n\pm x^m\pm 1.$ \\
\vglue 2mm
\begin{tabular}{|l|l|l|l|l|l|}
\hline
$m$ & $n$ & $x^n-x^m-1$ & $x^n+x^m-1$ & $x^n-x^m+1$ & $x^n+x^m+1$ \\
\hline
even & odd & reducible if $q>b$ & irreducible & irreducible & reducible if 
$q>b$ 
\\
 & & $F=\Phi_6(x^{(n,m)})$  & & 
&$F=\Phi_3(x^{(n,m)})$ \\
 \hline
even & even & reducible   & reducible & 
reducible  &same as above   \\
&&if $q>b,a>p$& if $a=p$, $q>b$&if $p> a, q> b$&\\
 & & $F=\Phi_6(x^{(n,m)})$ 
 &   $F=\Phi_6(x^{(n,m)})$ &$F=\Phi_6(x^{(n,m)})$  & \\
 \hline
 odd & even & irreducible & irreducible &same as above  & same as above 
 \\
\hline
 odd & odd & irreducible & same as even-even  & irreducible &same as above
\\
 \hline
\end{tabular}\\

\section*{Factorization of $x^n+\ep_1 x^m+p^k\ep_2$}
Suppose $f(x)=x^n+\ep_1x^m+p^k\ep_2$ be a polynomial of degree $n$ with $\ep_i\in \{\, -1, +1\,\}, k\ge 2$. We will first prove the separability of such polynomials using discriminant. It is known that 

\begin{theorem}\cite{GD}\label{th0}
The discriminant of the trinomial $x^n+ax^m+b$ is 
\begin{equation*}
D=(-1)^{\binom{n}{2}}b^{m-1}\left[ n^{n/d}b^{n-m/d}-(-1)^{n/d}(n-m)^{n-m/d}m^{m/d}a^{n/d}\right]^d
\end{equation*}
where $d=(n,m)$.
\end{theorem}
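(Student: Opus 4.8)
The statement is the classical formula for the discriminant of a trinomial, and in the paper it is only quoted with the reference \cite{GD}; nevertheless, the natural route to a proof is as follows. The plan is to use the identity $\operatorname{disc}(f)=(-1)^{\binom{n}{2}}\operatorname{Res}(f,f')$, valid for any monic $f$ of degree $n$, together with multiplicativity of the resultant in its second argument. Since
\[
f'(x)=nx^{n-1}+amx^{m-1}=x^{m-1}\bigl(nx^{n-m}+am\bigr),
\]
this gives $\operatorname{Res}(f,f')=\operatorname{Res}\bigl(f,x^{m-1}\bigr)\cdot\operatorname{Res}\bigl(f,\,nx^{n-m}+am\bigr)$, and I would treat the two factors separately.

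The first factor is immediate from $\operatorname{Res}(f,x^{m-1})=\bigl(\prod_{\alpha}\alpha\bigr)^{m-1}$, the product over the roots $\alpha$ of $f$; since $\prod_{\alpha}\alpha=(-1)^{n}b$ this equals $(-1)^{n(m-1)}b^{m-1}$. For the second factor, set $h(x)=nx^{n-m}+am$ and use $\operatorname{Res}(f,h)=(-1)^{n(n-m)}n^{n}\prod_{\beta}f(\beta)$, the product over the roots $\beta$ of $h$. Each such $\beta$ satisfies $\beta^{n-m}=-am/n$, whence $f(\beta)=\beta^{m}\beta^{n-m}+a\beta^{m}+b=\tfrac{a(n-m)}{n}\beta^{m}+b$. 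Now write $d=(n,m)$, $n=dn'$, $m=dm'$ with $(n',m')=1$; since $(n-m,m)=(n,m)=d$, as $\beta$ runs over the $n-m$ roots of $h$ the value $\beta^{m}$ runs $d$ times through the $n'-m'$ numbers $\rho^{m}\mu$, where $\rho$ is one fixed root of $h$ and $\mu$ ranges over the $(n'-m')$-th roots of unity. Applying the elementary identity $\prod_{\mu^{\ell}=1}(A\mu+b)=b^{\ell}-(-1)^{\ell}A^{\ell}$ with $\ell=n'-m'$ and $A=\tfrac{a(n-m)}{n}\rho^{m}$, and using $\rho^{m(n'-m')}=(-am/n)^{m'}$, collapses $\prod_{\beta}f(\beta)$ into a perfect $d$-th power; clearing the resulting power of $n$ in the denominator gives
\[
\operatorname{Res}(f,h)=(-1)^{n(n-m)}\Bigl[\,n^{n/d}b^{(n-m)/d}-(-1)^{n/d}(n-m)^{(n-m)/d}m^{m/d}a^{n/d}\,\Bigr]^{d}.
\]

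Finally I would multiply the two pieces and collect signs: $\operatorname{Res}(f,f')=(-1)^{\,n(m-1)+n(n-m)}b^{m-1}\bigl[\cdots\bigr]^{d}=(-1)^{\,n(n-1)}b^{m-1}\bigl[\cdots\bigr]^{d}=b^{m-1}\bigl[\cdots\bigr]^{d}$ because $n(n-1)$ is even, and then multiply by $(-1)^{\binom{n}{2}}$ to land on exactly the stated expression. The only delicate point is the root-of-unity counting that forces the bracketed quantity to appear as a genuine $d$-th power carrying the internal sign $(-1)^{n/d}$ and the correct exponents $(n-m)/d$, $m/d$, $n/d$; everything else is routine resultant bookkeeping. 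For the purposes of this paper the computation need not be reproduced, since the formula is classical and we only invoke it in what follows to conclude that the relevant trinomials are separable.
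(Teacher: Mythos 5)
The paper offers no proof of this statement at all---it is simply quoted from \cite{GD}---so your sketch is to be measured against that classical source, and it is in fact a correct reconstruction of the standard resultant argument: the factorization $f'=x^{m-1}(nx^{n-m}+am)$, multiplicativity of the resultant, the evaluation $\operatorname{Res}(f,h)=(-1)^{n(n-m)}n^{n}\prod_{\beta}f(\beta)$, the count that $\beta^{m}$ runs $d$ times over $\rho^{m}\mu$ with $\mu^{(n-m)/d}=1$, the identity $\prod_{\mu^{\ell}=1}(A\mu+b)=b^{\ell}-(-1)^{\ell}A^{\ell}$, and the final sign bookkeeping all check out and yield exactly the displayed formula (whose exponents $n-m/d$ are of course to be read as $(n-m)/d$). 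So your proposal is correct and follows essentially the same route as the cited reference; no gap to report.
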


\begin{theorem}\label{separable}
Let $p$ be a prime. The polynomial $f(x)=x^n+\ep_1x^m+p^k\ep_2$ is separable over $\Q$, $\ep_i\in \{\, -1, +1\,\}$. 
\end{theorem}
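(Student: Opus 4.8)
The plan is to show that the discriminant $D$ of $f(x)=x^n+\ep_1 x^m+p^k\ep_2$ is nonzero, since separability over $\Q$ (a perfect field) is equivalent to $D\neq 0$. Applying Theorem~\ref{th0} with $a=\ep_1$ and $b=p^k\ep_2$, and writing $d=(n,m)$, the discriminant is a unit (up to sign) times
\[
\left[ n^{n/d}(p^k\ep_2)^{(n-m)/d}-(-1)^{n/d}(n-m)^{(n-m)/d}m^{m/d}\ep_1^{n/d}\right]^d .
\]
Since $p^k\ep_2$ and $\ep_1^{n/d}$ are nonzero, $D=0$ forces the bracketed factor to vanish, i.e.
\[
n^{n/d}(p^k)^{(n-m)/d}\ep_2^{(n-m)/d}=(-1)^{n/d}(n-m)^{(n-m)/d}m^{m/d}\ep_1^{n/d}.
\]
The idea is to compare the $p$-adic valuations of the two sides and derive a contradiction from the fact that the left side carries an extra factor of $p^{k(n-m)/d}$ with $k\ge 2$.

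First I would dispose of trivial and degenerate cases: if $n=m$ then $f$ is not of trinomial shape (or reduces to a binomial times a unit), and if $m=0$ likewise; more importantly $n>m\ge 1$ forces $(n-m)/d\ge 1$, so the power $p^{k(n-m)/d}$ is a genuine positive power of $p$ with exponent at least $k\ge 2$. Next, let $v=v_p$ denote the $p$-adic valuation. Taking $v$ of both sides of the displayed equation, the left side has valuation $(n/d)\,v(n)+k(n-m)/d$, while the right side has valuation $(n-m)/d\cdot v(n-m)+ (m/d)\,v(m)$. The crux is then a counting argument showing these cannot be equal: intuitively, $n=(n-m)+m$ and standard valuation inequalities (e.g. $v(n)\le \max(v(n-m),v(m))$, with the refinement that equality patterns are constrained) bound the right side's valuation in terms of $v(n-m)$ and $v(m)$ in a way that cannot absorb the surplus $k(n-m)/d$.

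To make the counting precise I would argue as follows. Write $n=d n'$, $m=d m'$, $n-m=d(n'-m')$ with $\gcd(n',m')=1$, so also $\gcd(n'-m',m')=1$ and $\gcd(n'-m',n')=1$. Because $p$ can divide at most one of any two of the pairwise-coprime integers among $\{n', m', n'-m'\}$, and $v(d)$ contributes the same way to $v(n)$, $v(m)$, $v(n-m)$, one gets: if $p\mid n'$ then $p\nmid m'$ and $p\nmid(n'-m')$, so the right side has valuation exactly $n'v(d)+m'v(d)$ coming only from the $d$-parts... I would instead organize this cleanly by setting $p^s\parallel d$ and analyzing three subcases according to whether $p$ divides $n'$, $m'$, or $n'-m'$ (at most one can occur). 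In each subcase the equation of valuations becomes an identity in $s$ and one of $v(n'),v(m'),v(n'-m')$ that is violated by the $+k(n'-m')$ term, using $n'-m'\ge 1$ and $k\ge 2$. Finally, even if $p=2$ or small primes create sign subtleties, the valuation argument only uses absolute values, so signs $\ep_i$ play no role.

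\textbf{Main obstacle.} The genuinely delicate point is the valuation bookkeeping when $p$ divides $d$ (equivalently, when $p\mid\gcd(n,m)$): then $v(n)$, $v(m)$, $v(n-m)$ all share the common term $v(d)=s$, and one must check that the \emph{coefficients} $n/d$, $m/d$, $(n-m)/d$ multiplying these valuations on the two sides do not conspire to cancel the extra $k(n-m)/d$. The resolution is that $n/d=m/d+(n-m)/d$, so the $s$-contributions are $s(m/d+(n-m)/d)$ on the left versus $s\cdot(n-m)/d + s\cdot m/d$ on the right from the $d$-parts alone — these match, and the mismatch is then governed purely by the coprime cofactors, where at most one of $n/d,m/d,(n-m)/d$ is divisible by $p$; isolating that case and using $k\ge 2$ closes the argument.
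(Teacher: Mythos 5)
Your overall strategy coincides with the paper's: invoke Theorem~\ref{th0}, note that separability in characteristic zero is equivalent to $D\neq 0$, and rule out the vanishing of the bracketed factor. The difference is in how that vanishing is ruled out, and here your argument has a genuine gap. Your claimed contradiction rests on the assertion that, after the common contribution $s=v_p(d)$ cancels, the valuation identity $(n/d)\,v_p(n')+k(n-m)/d=((n-m)/d)\,v_p(n'-m')+(m/d)\,v_p(m')$ is violated in every subcase. That is true when $p\nmid n'm'(n'-m')$ and when $p\mid n'$, but it fails in the subcases $p\mid m'$ and $p\mid n'-m'$: there the surplus $k(n-m)/d$ \emph{can} be absorbed. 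Concretely, take $n=3$, $m=2$, $p=2$, $k=2$ (so $d=1$): the equation whose impossibility you must show is $27\cdot(4\ep_2)=(-\ep_1)\cdot 1\cdot 4$, and both sides have $2$-adic valuation exactly $2$ ($3\,v_2(3)+2\cdot 1=2$ on the left, $1\cdot v_2(1)+2\,v_2(2)=2$ on the right). So the valuation comparison alone produces no contradiction; the equation is false for a different reason (the sizes of the two sides), and your proof as written does not reach it.

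The repair is elementary and in fact simpler than the valuation bookkeeping: compare absolute values. Since $m\le n$ and $n-m\le n$, one has $(n-m)^{(n-m)/d}\,m^{m/d}\le n^{(n-m)/d}\,n^{m/d}=n^{n/d}$, while the other side has absolute value $n^{n/d}\,p^{k(n-m)/d}\ge n^{n/d}\,p>n^{n/d}$ because $n>m$ and $k\ge 1$. Hence the bracket cannot vanish, for any $d$, and the signs $\ep_i$ are irrelevant. This size argument also makes the paper's preliminary reduction to $d=1$ (which you correctly avoid) unnecessary, and it shows the statement needs no hypothesis $k\ge 2$. If you prefer to keep the $p$-adic route, you must supplement it in the two bad subcases by comparing the prime-to-$p$ parts, which essentially amounts to the same magnitude estimate.
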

\begin{proof}
By Theorem \ref{th0}, the discriminant of $f(x)$ is 
\begin{equation}\label{eqd}
D_f=(-1)^{\binom{n}{2}}(p^k\ep_2)^{m-1}\left[ n^{n/d}(p^k\ep_2)^{n-m/d}-(-\ep_1)^{n/d}(n-m)^{n-m/d}m^{m/d}\right]^d
\end{equation}
with $d=(n,m)$. Since $f(x)$ is separable over $\Q$ if and only if $f(x^d)$ is separable, it is sufficient to consider $d=1$. $f(x)$ has multiple root if and only if $D_f=0$. Then from \eqref{eqd}, we have
\begin{equation*}
n^n(p^k\ep_2)^{n-m}=(-\ep_1)^n(n-m)^{n-m}m^m
\end{equation*}
which is not possible as $d=1$ and $p$ being a prime. 
\end{proof}

\begin{theorem}\label{th1}
Let $p$ be a prime and $f(x)=x^n+\ep_1x^m+\ep_2p^k$ be a polynomial of degree $n$ with $k\ge 2$. Then $f(x)$ has all its root on the region $|z|>1$.
\end{theorem}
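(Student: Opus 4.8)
The plan is to show that every complex root $\alpha$ of $f(x)=x^n+\ep_1x^m+\ep_2p^k$ satisfies $|\alpha|>1$ by a direct magnitude estimate, splitting into the cases $|\alpha|<1$, $|\alpha|=1$, and $|\alpha|\ge 1$ but small, and deriving a contradiction in each. First I would rule out $|\alpha|\le 1$: if $|\alpha|\le 1$ then $|\alpha^n+\ep_1\alpha^m|\le |\alpha|^n+|\alpha|^m\le 2$, whereas $|\ep_2p^k|=p^k\ge 4$ since $p\ge 2$ and $k\ge 2$; hence $f(\alpha)=0$ is impossible. This already handles the closed unit disk, so it remains only to push the bound strictly past $1$, i.e. to exclude roots with $1<|\alpha|$ arbitrarily close to $1$ — but in fact the same inequality does this for free: for $|\alpha|\le r$ with $r$ chosen so that $2r^n< p^k$ we still get a contradiction, and since $p^k\ge 4>2=2\cdot 1^n$, there is a neighbourhood $|\alpha|\le 1+\delta$ on which $|\alpha^n+\ep_1\alpha^m|\le |\alpha|^n+|\alpha|^m\le 2(1+\delta)^n<p^k$, forcing all roots into $|z|>1+\delta$, in particular $|z|>1$.

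More carefully, the clean statement is: if $f(\alpha)=0$ then $p^k=|\alpha^n+\ep_1\alpha^m|\le |\alpha|^n+|\alpha|^m$, so writing $t=|\alpha|$ we need $t^n+t^m\ge p^k\ge 4$. If $t\le 1$ this fails since $t^n+t^m\le 2<4$; so $t>1$, and since $t^n+t^m\le 2t^n$ we get $t^n\ge p^k/2\ge 2$, hence $t>1$ strictly with room to spare. I would present exactly this chain: the key inequality is the reverse triangle inequality $|\alpha^n+\ep_1\alpha^m|\ge |\ep_2 p^k| - 0$ rewritten as $p^k=|\alpha^n+\ep_1\alpha^m|$, combined with the forward triangle inequality on the right. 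The hypotheses $p$ prime (so $p\ge 2$) and $k\ge 2$ (so $p^k\ge 4$) are what make $2<p^k$, and that strict gap is precisely what separates the roots from the unit circle.

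I do not anticipate a serious obstacle here; the only point requiring a word of care is that the bound must be made \emph{strict} ($|z|>1$, not $|z|\ge 1$), and this follows because the inequality $t^n+t^m<p^k$ holds not just at $t=1$ but on a small interval $[0,1+\delta]$ by continuity, so no root can lie on $|z|=1$ either. Equivalently, one can note directly that if $|\alpha|=1$ then $|\alpha^n+\ep_1\alpha^m|\le 2<4\le p^k$, so the unit circle contains no root, and combined with the $|\alpha|<1$ case every root lies strictly outside the closed unit disk. This also immediately gives, as a byproduct, that $f(0)=\ep_2 p^k\ne 0$ and that $f$ has no cyclotomic factors, which will be useful in the proofs of Theorems~\ref{mainthm} and~\ref{gen}.
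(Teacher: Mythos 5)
Your proof is correct and uses essentially the same argument as the paper: if $|\alpha|\le 1$ were a root, then $p^k=|\alpha^n+\ep_1\alpha^m|\le|\alpha|^n+|\alpha|^m\le 2$, contradicting $p^k\ge 4$. The extra discussion about pushing the bound to $1+\delta$ is unnecessary (the statement only requires excluding the closed unit disk, which your core inequality already does), but it does no harm.
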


\begin{proof}
Let $z_1$ be a root of $f(x)$ with $|z_1|\le 1$. Then $f(z_1)=0$ gives 
\begin{equation*}
p^k\ep_2=-(z_1^n+\ep_1z_1^m).
\end{equation*}
Taking modulus on both side gives $p^k=|z_1^n+\ep_1z_1^m|\le |z_1|^n+|z_1|^m\le 2$, which contradicts the fact that $p$ is a prime number and $k\ge 2$. Hence all the roots of $f(x)$ lies in the region $|z|>1$. 
\end{proof}
By using this theorem, we will prove Theorem \ref{gen}.
\begin{proof}[\unskip\nopunct]{\textbf{Proof of Theorem \ref{gen}:}}
Suppose $f(x)=\prod\limits_{i=1}^t f_i(x)$ be non-trivial factorization of $f(x)$, where each $f_i(x)$ is irreducible. Since $f(x)$ is a  monic polynomial,  we assume that each $f_i(x)$ is monic. From Theorem \ref{separable} $f(x)$ being separable, $f_i(x)\ne f_j(x)$ for $i\ne j$. By using Theorem \ref{th1}, from $|f(0)|=p^k=\prod\limits_{i=1}^t |f_i(0)|$, we have $|f_i(0)|\ge p$. In other words, $t\le k$ and consequently they are non-reciprocal. 
\end{proof}
Now we will prove the irreducibility of $x^n+\ep_1x+\ep_2p^k$ for every $k\ge 2$. 

\begin{proof}[\unskip\nopunct]{\textbf{Proof of Theorem \ref{mainthm}:}}
If $f(x)=x^2+\ep_1x+p^k\ep_2$ is reducible then all of its roots are integers only, by Rational Root theorem. But $u(u+\ep_1)=-\ep_2p^k$ is not possible for any integer $u$.

Suppose $n\ge 3$ and $f(x)$ is reducible. Let $f(x)=f_1(x)f_2(x)$ be a non-trivial factorization of $f(x)$ with $\deg(f_1)=s$. Without loss of generality, we assume that both $f_1(x), f_2(x)$ are monic polynomials. From Theorem \ref{th1} we have $|f_i(0)|\ge p$. Since $f_1(0)f_2(0)=p^k\ep_2$, let $|f_1(0)|=p^v$ and $|f_2(0)|=p^{k-v}$ for some $v\ge 1$. We consider the following two polynomials
\begin{equation*}
g(x)=x^sf_1(x^{-1})f_2(x)=\sum\limits_{i=0}^n b_ix^i, \text{say}
\end{equation*}
and $\tilde{g}(x)=\sum\limits_{i=0}^n b_{n-i}x^i$. The way the polynomials has been defined, we have $f_1(0)=b_n$ and $f_2(0)=b_0$. Let $b_0=p^v\ep_2'$ with $|\ep_2'|=1$. Since $g(x)\tilde{g}(x)=x^nf(x)f(x^{-1})$, comparing the coefficients of $x^n$, we get
\begin{equation*}
 \sum\limits_{i=1}^{n-1} b_i^2=p^{2k}-p^{2(k-v)}-p^{2v}+2.
\end{equation*}
Suppose there are $r$ number of non-zero $b_i$'s, say $0<j_r<j_{r-1}<\cdots<j_1<n$ such that $b_{j_l}\ne 0$. Then $g(x)=b_nx^n+b_{j_1}x^{j_1}+\text{middle terms}+b_{j_r}x^{j_r}+b_0$ and 
\begin{equation}\label{eqt1}
g(x)\tilde{g}(x)=p^k\ep_2x^{2n}+b_nb_{j_r}x^{2n-j_r}+b_0b_{j_1}x^{n+j_1}+\cdots+p^k\ep_2.
\end{equation}
Whereas 
\begin{equation}\label{eqt2}
f(x)x^nf(x^{-1})=p^k\ep_2x^{2n}+\ep_1 x^{2n-1}+p^k\ep_1\ep_2x^{n+1}+\cdots+p^k\ep_2.
\end{equation}
Since $n\ge 3$, the second largest term in \eqref{eqt2} is $x^{2n-1}$ and has coefficient $\ep_1$. The second largest term in \eqref{eqt1} is either $x^{2n-j_r}$ or $x^{n+j_1}$ or both. That is either $j_r=1$ or $j_1=n-1$ or $n=j_r+j_1=1+(n-1)$ respectively. In all these cases, the corresponding coefficient is divisible by $p$ which is impossible. Therefore $f(x)$ has to be irreducible.
\end{proof}


\begin{thebibliography}{20}
\bibitem{F:J} C. Finch \& L. Jones, {\em On the irreducibility of $\{-1, 0, 1\}$-quadrinomials,}
Integers 6 (2006), A16, 4 pp.
\bibitem{GD}
C.R. Greenfield \& D. Drucker, \emph{On the Discriminant of a Trinomial}, Linear Algebra its Appl., Vol. 62(1984), 105-112.
\bibitem{BS}
B.Koley \& A.S. Reddy, \emph{An irreducibility criterion for polynomials over integers}, https://arxiv.org/abs/1907.03307.
\bibitem{Lju}
W.Ljunggren, \emph{On the irreducibility of certain trinomials and 
quadrinomials}, Math.Scand. 8 (1960), 65-70.

\bibitem{mills} W.H. Mills, {\em The factorization of certain quadrinomials},
Math. Scand. 57 (1985), 44-50.
\bibitem{LP}
L. Panitopol \& D. Stef\"anescu, \emph{Some criteria for irreducibility of polynomials}, Bull. Math. Soc. Sci. Math. R. S. Roumanie (N. S.), 29 (1985),  69-74.
\bibitem{Sel}
E. S. Selmer, \emph{On the irreducibility of certain trinomials}, Math.Scand. 
4 (1956), 287-302.



\bibitem{tver} Helge Tverberg, {\em On the irreducibility of the trinomials $x^n\pm x^m\pm 1$},
Math.Scand. 8 (1960), 121-126.

\bibitem {RT}
R. Thangadurai, \emph{On the coefficients of cyclotomic polynomials, Cyclotomic fields and related topics} (Pune,
1999), Bhaskaracharya Pratishthana, Pune, 2000, 311–322.

\end{thebibliography}
\end{document}